\documentclass[letterpaper, 10 pt, conference]{ieeeconf}  % Comment this line out
                                                          % if you need a4paper
%\documentclass[a4paper, 10pt, conference]{ieeeconf}      % Use this line for a4
                                                          % paper

\IEEEoverridecommandlockouts                              % This command is only
                                                          % needed if you want to
                                                          % use the \thanks command
\overrideIEEEmargins
% See the \addtolength command later in the file to balance the column lengths
% on the last page of the document

% The following packages can be found on http:\\www.ctan.org
%\usepackage{graphics} % for pdf, bitmapped graphics files
%\usepackage{epsfig} % for postscript graphics files
%\usepackage{mathptmx} % assumes new font selection scheme installed
%\usepackage{times} % assumes new font selection scheme installed
%\usepackage{amsmath} % assumes amsmath package installed
%\usepackage{amssymb}  % assumes amsmath package installed

\usepackage{bbm}
\usepackage{mathrsfs}
\usepackage{verbatim}
\usepackage{amsmath}
\usepackage{amsfonts}
\usepackage{bm}
\usepackage{graphicx}
\usepackage{float}

\usepackage{amsthm}
\usepackage{xcolor}
\usepackage[colorinlistoftodos]{todonotes}
\usepackage{mathtools}
\usepackage{cite}
\usepackage{colortbl}
\usepackage{subcaption}
\theoremstyle{definition}
\newtheorem{assumption}{Assumption}
\newtheorem{definition}{Definition}
\newtheorem{remark}{Remark}
\newtheorem{problem}{Problem}

\theoremstyle{plain}

\newtheorem{theorem}{Theorem}
\newtheorem{lemma}{Lemma}

\DeclareMathOperator*{\argmin}{arg\, min}

%\pagestyle{plain}
%\pagenumbering{arabic}

\title{\LARGE \bf Stochastic Control with Distributionally Robust Constraints for Cyber-Physical Systems Vulnerable to Attacks}

\author{Nishanth Venkatesh$^{1}$, {\itshape{Student Member, IEEE}}, Aditya Dave$^{2}$, {\itshape{Member, IEEE}}, \\
Ioannis Faros$^{1}$, {\itshape{Student Member, IEEE}},
Andreas A. Malikopoulos$^{1,2}$, {\itshape{Senior Member, IEEE}}
	\thanks{This research was supported by NSF under Grants CNS-2149520 and CMMI-2219761.}
    \thanks{$^{1}$Department of Systems Engineering, Cornell University, Ithaca, NY 14850 USA.}
    \thanks{$^{2}$School of Civil and Environmental Engineering, Cornell University, Ithaca, NY 14853 USA (email: \texttt{ns942@cornell.edu; a.dave@cornell.edu; if74@cornell.edu; amaliko@cornell.edu).}} }

\begin{document}

\maketitle
\thispagestyle{empty}

\begin{abstract}

In this paper, we investigate the control of a cyber-physical system (CPS) while accounting for its vulnerability to external attacks.
We formulate a constrained stochastic problem with a robust constraint to ensure robust operation against potential attacks.
We seek to minimize the expected cost subject to a constraint limiting the worst-case expected damage an attacker can impose on the CPS.
We present a dynamic programming decomposition to compute the optimal control strategy in this robust-constrained formulation and prove its recursive feasibility.
We also illustrate the utility of our results by applying them to a numerical simulation.
\end{abstract}

\section{Introduction}
\label{section:Introduction}

Cyber-physical systems (CPSs) have enabled highly efficient control of physical processes by tightly coupling sensing, communication, and computational processing to generate real-time decisions with classical \cite{kim2012cyber} and nonclassical information structures \cite{Malikopoulos2021}.
They span various important applications including, but not limited to, connected and automated vehicles \cite{Malikopoulos2020,Nishanth2023AISmerging}, 
Internet of Things \cite{ansere2020optimal}, and 
social media platforms \cite{Dave2020SocialMedia}. 
However, in each of these applications, the interplay between the cyber components and the physical world can make the system vulnerable to various security threats, e.g., control system malware \cite{baezner2017stuxnet} and staged attacks \cite{serror2020challenges}. 
This has led to many studies on controlling CPSs while ensuring robustness and resilience to attacks \cite{ghiasi2023comprehensive,10155711}. 

The common modeling framework for CPSs utilizes a stochastic formulation to account for uncertainties in the dynamics that arise within the evolution of the physical process.
In this formulation, an agent is assumed to have access to a prior distribution for all uncertainties and must compute a control strategy to generate real-time control actions that minimize the total expected cost
\cite{Dave2021nestedaccess, sutton2018reinforcement}.
In stochastic formulations, constraints on the state and actions are modeled as probabilistic constraints, which can be imposed either in expectation or with some probabilities \cite{varagapriya2023chance}.
Similarly, approaches like those reported in  \cite{altman2021constrained}, \cite{ermon2012probabilistic} consider probabilistic constraints on the cumulative reward.
%almost surely bounding a discounted cost over a finite/infinite time horizon \cite{piunovskiy2006dynamic},\cite{tamar2012policy}.  
However, the actual performance and constraint satisfaction of an optimal control strategy are very sensitive to changes in a mismatch between the assumed prior probability model and the actual model \cite{Malikopoulos2022a,mannor2007bias}. 
Such a mismatch is bound to occur when a CPS is under attack from an adversary. Thus, it may not be appropriate to model safety-critical requirements on system behavior using probabilistic constraints in a stochastic formulation.

To accommodate the needs of safety-critical systems, several research efforts \cite{bertsekas1973sufficiently,iyengar2005robust,gagrani2017decentralized,shoukry2013minimax} have explored minimax formulations. Similar approaches \cite{Dave2021minimax, Dave2023approximate} consider non-stochastic formulations in which the agent does not have knowledge about the distributions of uncertainties and uses only the set of feasible values to compute optimal strategies that minimize the maximum costs. 
Though such approaches are suitable for applications under attack, such as cyber-security \cite{rasouli2018scalable}, and power systems \cite{zhu2011robust}, during regular operation of systems without attacks, they lead to outcomes that are overly conservative \cite{coraluppi2000mixed}.
Consequently, there remains a need for alternative approaches towards the control of vulnerable systems that avoid overly conservative decision-making during regular system operation and maintain a level of reliability when the system is occasionally attacked.

In this paper, we combine the superior performance of stochastic formulations in achieving an objective and the safety guarantees of worst-case formulations in minimizing vulnerabilities. 
To this end, we impose a distributionally robust constraint on a secondary objective that accounts for the vulnerabilities of a CPS to an attack. 
Concurrently, we aim to minimize the expected value of a primary cost for the best performance over a finite horizon.
%To this end, we consider imposing a robust constraint on a secondary objective that accounts for the vulnerabilities of a CPS to an attack, while minimizing the expected value of a primary cost for the best performance over a finite horizon.
Our formulation generalizes the  previous work of \cite{chen2004dynamic}, which addressed the problem of minimizing an expected discounted cost subject to either an expected or a minimax constraint. 
By considering a distributionally robust constraint, our formulation allows for greater control over the trade-off between conservativeness and optimality by appropriately adjusting the size of the uncertainty set for probability distributions.
%is more general than either of those cases because we consider a worst-case stochastic constraint and characterize the robustness by defining a set of feasible transition distributions. 
%Due to the way we characterize uncertainty induced by an attack, our contribution is a general version of \cite{chen2004dynamic}.
%Specifically, we consider that the attacker selects the worst-case uncertainty in evolution of the system and that this uncertainty belongs to a known set. This is reflected in the formulation of the imposed robust constraint. 
In the extreme case that the set of feasible distributions is a singleton, we recover an expected value constraint. In contrast, if we expand the set to allow every possible distribution on the state space, we recover the non-stochastic worst-case constraint as a special case.
Thus, by changing the set of feasible distributions, we can better select the level of conservativeness of our formulation.

Our main contributions in this paper are (1) the problem formulation of controlling a vulnerable CPS using a stochastic cost and distributionally robust constraint (Problem \ref{problem_1}), (2)  a dynamic programming (DP) decomposition for this problem, which computes the optimal strategy that ensures recursive feasibility of the constraint (Theorem \ref{theorem_dp}), and (3) the illustration of the utility of our results by comparing them to both stochastic and worst-case approaches in numerical simulation (Section \ref{section:numerical_example}).

The remainder of the paper proceeds as follows. In Section \ref{section:problem}, we formulate the problem. In Section \ref{section:feasible_sets_DP}, we present the DP decomposition. In Section \ref{section:numerical_example}, we demonstrate our results in a numerical example, and in Section \ref{section:conclusion}, we draw concluding remarks. 

\section{Model}
\label{section:problem}

We consider a CPS whose evolution is described by a finite Markov decision process (MDP), denoted by a tuple $(\mathcal{X},\mathcal{U}, n, P, c, c_n)$, where $\mathcal{X}$ is a finite state space and $\mathcal{U}$ is a finite set of feasible actions available to an agent seeking to control the MDP. The system evolves over discrete time steps denoted by $t=0, \dots, n$, where $n \in \mathbb{N}$ is the finite time horizon. 
The state of the system and the control action of the agent at each $t$ are denoted by the random variables $X_t$ and $U_t$, respectively.
The transition function  at each $t$ is denoted by
$P_t: \mathcal{X} \times \mathcal{U} \times \mathcal{X} \to \Delta(\mathcal{X})$, where $\Delta(\mathcal{X})$ is the set of all probability distributions on the state space $\mathcal{X }$.
Nominally, the transition function  is given by $P_t = \Bar{P}$ for all $t$, where $\Bar{P} \in \Delta(\mathcal{X})$.
For the realizations $x_t \in \mathcal{X}$ and $u_t \in \mathcal{U}$ of the state $X_t$ and the control action $U_t$, the probability of transitioning to a state $x_{t+1} \in \mathcal{X}$ is $\mathbb{P}(X_{t+1}=x_{t+1} \,|\, x_t,u_t) = P_t(x_{t+1}\,|\,x_t,u_t)$. 
%The agent perfectly observes the state at each $t$ and collects the history of observations and control actions in their memory $H_t = (X_{0:t}, U_{0:t-1}) \in \mathcal{H}_t$.
The agent selects the action using a control law $g_t: \mathcal{X} \to \mathcal{U}$ as $U_t = g_t(X_t)$, where $g_t$ is chosen from the feasible set of control laws at time $t$, denoted as $\mathcal{G}_t$.
The tuple of control laws denotes the control strategy of the agent $\boldsymbol{g} := (g_0,\dots,g_{n-1})$, where $\boldsymbol{g} \in \mathcal{G}$ and $\mathcal{G} = \prod_{t = 0}^{n-1}\mathcal{G}_{t}$.
After selecting the action at each $t=0,\dots,n-1$, the agent incurs a cost $c(X_t, U_t)$ generated using the function $c: \mathcal{X} \times \mathcal{U} \to \mathbb{R}$.  
% The agent perfectly observes the state at each instance of time. Thus, 
% The system selects the control action at each time $t$ based on the mapping $g_t: \mathcal{H}\rightarrow \mathcal{U}$. This is called the control law of the system at time $t$, the tuple of control laws over the time steps $t=0, \ldots, n-1$ is the control strategy $\boldsymbol{g} \in \mathcal{G}$. At any each $t$, $\boldsymbol{g}$ denotes the tuple $\boldsymbol{g}=(g_t, \ldots, g_{n-1})$.
% The cost associated with performance for each state action pair is $c:\mathcal{X}\times\mathcal{U} \rightarrow \mathbb{R}$, which is perfectly observable by the system. 
Then, the performance of a strategy $\boldsymbol{g}$ is measured by the total expected cost beginning at an initial state $x_0 \in \mathcal{X}$:
\begin{equation}
    \label{performance_g}
    \mathcal{J}_0({\boldsymbol{g}}; x_0) = \mathbb{E}^{\boldsymbol{g}}\Bigg[\sum_{t=0}^{n-1} c(X_t,U_t) + c_n(X_n) \,\Big|\,x_0\Bigg],
\end{equation}
where $c_n: \mathcal{X} \to \mathbb{R}$ is the terminal cost, and $\mathbb{E}^{\boldsymbol{g}}$ denotes the expectation on all the random variables with respect to the probability distributions generated by the choice of control strategy $\boldsymbol{g}$. 

%At each $t=0,\dots,T$, the observation of the system is given by $Y_t \in \mathcal{Y}$. Each observation $y_t$ along with the latest action $u_{t-1}$ is accumulated into the history of the system $h_t = (h_{t-1},y_t,u_{t-1})$ with $h_t\in \mathcal{H}$.
%We consider a perfectly observed system i.e, $y_t=x_t$ capable of perfect recall of the entire history $h_t$.

%\subsection{Problem formulation}
%In a CPS, the standard problem of selecting a control strategy $\boldsymbol{g}$ to minimize the total expected cost \eqref{performance_g} may not suffice to ensure smooth operation, especially when the CPS is vulnerable to attacks.
%In a CPS, the standard problem of selecting a control strategy $\boldsymbol{g}$ to minimize the total expected cost \eqref{performance_g} is rendered challenging due to the possibility of an attack on the system during operation.

In the context of a CPS, the conventional approach of selecting a control strategy $\boldsymbol{g}$ to minimize the total expected cost \eqref{performance_g} may not be adequate to ensure smooth operation, particularly when the CPS is vulnerable to attacks by an adversary.
We consider that the presence or absence of an adversary during the system's operation is determined at the onset; however, this information is unknown to the agent.
The adversary's influence on the system's dynamics results in a change in the transition probability at each $t=0,\dots,n-1$ from a known set $\mathcal{P} \subseteq \Delta(\mathcal{X})$. Thus, an attack may be reflected by the choice of the worst transition function from $\mathcal{P}$. We allow the adversary to attack the system with access to the realization of the state $x_t \in \mathcal{X}$ and action $u_t \in \mathcal{U}$.
Note that the nominal transition function  $\Bar{P}$ belongs to the set $\mathcal{P}$ to allow for the case of no attack.
%Thus, an attack may involve the selection of arbitrarily harmful transition probabilities within the given set.
%Naturally, in the case where the adversary is absent, the choice of transition probability is restricted to just $\mathcal{P}=\{\Bar{P}\}$. Hence, the nominal transition function  $\Bar{P}\in \mathcal{P}$ in all situations.
%The known set $\mathcal{P}$ contains the distribution $P$, which is the regular transition function  when the CPS is not under attack.
%In the case that the agent is absolutely sure that there is no attack, the known set $\mathcal{P}$ is reduced just to $\{\Bar{P}\}$, the nominal transition function . This is 
%The set $\mathcal{P}$ contains the distribution $P$, which is the regular transition function  when the CPS is not under attack.
%In contrast, in the absence of an attack, the system behaves as a regular MDP with nominal transition distribution $P_N \in \Delta(\mathcal{X})$.
%If the evolution of the system under no attack is using the nominal transition distribution $P_N$
%When the system is under attack, the transition function  $P$ of the system is known to belong to a set $\mathcal{P} \in \Delta(\mathcal{X})$.

An agent that observes the presence or absence of an adversary can select either a purely robust or risk-neutral formulation, depending on the current situation.
However, a risk-neutral formulation may involve an arbitrarily large risk for the agent and leave the CPS vulnerable during an attack. In contrast, a robust formulation may be too conservative for the majority of situations where no attack occurs.
Thus, we impose a robust constraint to \textit{limit} the worst-case damage possible during an attack while minimizing the expected total cost.
To this end, the agent incurs a constraint penalty $d(X_t, U_t) \in \mathbb{R}$ at each $t=0,\dots,n-1$. The total expected worst-case penalty is given by
%If the system in not under attack, the transition function  $P \in \{P_N\}$ is unambiguously known. Hence \eqref{performance_g} would suffice to quantify the performance of any strategy $\boldsymbol{g}$ and eventually find the optimal strategy $\boldsymbol{g}*$.
%Since we consider a possible attack, we naturally restrict the strategies to account for this by formulating a constraint:
%To account for the possibility of attack, for each strategy $g \in \,\mathcal{G}$ . we define a constraint function that needs to be satisfied before a by every
%Before any strategy $\boldsymbol{g} \in\mathcal{G}$ could be considered to optimize the performance $\mathcal{J}$, we impose a constraint that needs to be satisfied.
%The purpose of this constraint is to restrict a cost that could be incurred in case the system is attacked.
%To this end we formulate the following constraint function:
%In our case, to account for a possible attack, for each strategy $g$ we formulate the following constraint function:
\begin{multline}
    \label{exp_max_constraint}
    \mathcal{L}_0(\boldsymbol{g};x_0) = \\ \max_{P_{0:n-1}\in \mathcal{P}^n} \mathbb{E}^{\boldsymbol{g}}_{P_{0:n-1}}\Bigg[\sum_{t=0}^{n-1} d(X_t,U_t)
    + d_n(X_n)\,\Big|\,x_0\Bigg],
\end{multline}
where $d_n:\mathcal{X}\rightarrow \mathbb{R}$ is the terminal penalty, $P_{0:n-1}$ is the collection of transition functions for $t=0, \ldots, n-1$, each taking values in the set $\mathcal{P}$. 
Note that this penalty has a distributionally robust form where the attacker may select the worst transition function $P_t \in \mathcal{P}$ at each $t$.
Furthermore, the choice of a particular function at any time $t$ does not limit the functions available to the adversary at time $t+1$ in \eqref{exp_max_constraint}.
The distributionally robust constraint is formulated by defining an upper bound $l_0 \in \mathbb{R}$, on the worst-case total expected penalty.  

\begin{remark}
During an attack, the agent may prioritize a different property, e.g., safety, of the system rather than the total expected cost used in \eqref{performance_g}. Hence, the constraint penalty at each instance of time is considered to be distinct from the cost. However, if we seek to limit the influence of the adversary on the performance itself, the penalty in the constraint can be set equal to the cost at each $t$. 
\end{remark}

Next, we define the agent's constrained control problem.

\begin{problem} \label{problem_1}
The optimization problem is to compute the optimal control strategy $\boldsymbol{g}^* \in \mathcal{G}$, if one exists, subject to a constraint on \eqref{exp_max_constraint}, i.e.,
\begin{align}
\label{optimal_g}
\min_{\boldsymbol{g}\in \mathcal{G}} \, &\mathcal{J}_0(\boldsymbol{g};x_0),\\
\text{s.t.} \quad &\mathcal{L}_0(\boldsymbol{g};x_0)\leq l_0,\label{optimal_g_constraint}
\end{align}
for a given MDP $(\mathcal{X},\mathcal{U}, n, P, c, c_n)$, penalty functions $(d, d_n)$, set of transition functions  $\mathcal{P}$, upper bound $l_0 \in \mathbb{R}$, and initial state $x_0 \in \mathcal{X}$.
\end{problem}

% \begin{align}
% \label{optimal_g}
% \mathcal{J}_0(\boldsymbol{g}^*,x_0) &:= \inf_{g\in \mathcal{G}_{\lambda_0}}  \mathcal{J}_0(\boldsymbol{g},x_0),\\
% \mathcal{G}_{\lambda_0}:= \{\boldsymbol{g}\in \mathcal{G}\,|\, \mathcal{L}_0(\boldsymbol{g},x_0)\leq \lambda_0(x_0),\,\forall\, x_0\},
%\mathcal{J}^{g_{\lambda_0}^*}(x_0)= \inf_{\{g\in \mathcal{G}|\mathcal{L}_0^g(x_0)\leq \lambda_0(x_0)\}}  \mathcal{J}^g(x_0),
% \end{align}

We impose the following assumptions on our formulation:

\begin{assumption} \label{assumption_1}
    The costs and penalties at each instance of time are upper bounded by the finite maximum values $c^{M} \in \mathbb{R}$ and $d^{M} \in \mathbb{R}$, respectively. They are also lower bounded by the finite minimum values $c^{m} \in \mathbb{R}$ and $d^{m} \in \mathbb{R}$, respectively.
\end{assumption}

Assumption \ref{assumption_1} ensures that the expected total cost \eqref{performance_g} and robust total penalty \eqref{exp_max_constraint} are finite for any value of $n \in \mathbb{N}$.

\begin{assumption} \label{assumption_2}
    The bound $l_0 \in \mathbb{R}$ is such that the set $\mathcal{G}_{l_0}:= \{\boldsymbol{g}\in \mathcal{G}\,|\, \mathcal{L}_0(\boldsymbol{g};x_0)\leq \l_0\}$ is not empty.
\end{assumption}

Assumption \ref{assumption_2} ensures that Problem \ref{problem_1} has a feasible solution and, thus, it is well-posed. Our goal is to efficiently compute an optimal solution to Problem \ref{problem_1} without violating the constraint. Next, we present a DP decomposition for the problem.

\begin{comment}
We consider the set of strategies that satisfy the bound constraint. Among this set, we seek the optimal control strategy which minimizes the performance cost:
\begin{align}
\label{optimal_g}
\mathcal{G}_{\lambda_0}:= \{\boldsymbol{g}\in \mathcal{G}\,|\, \mathcal{L}_0^g(x_0)\leq \lambda_0(x_0),\,\forall\, x_0\},\\
\mathcal{J}^{g_{\lambda_0}^*}(x_0)= \inf_{\{g\in \mathcal{G}|\mathcal{L}_0^g(x_0)\leq \lambda_0(x_0)\}}  \mathcal{J}^g(x_0),
\mathcal{J}^{\boldsymbol{g}_{\lambda_0}^*}_0(x_0)= \inf_{g\in \mathcal{G}_{\lambda_0}}  \mathcal{J}_0^g(x_0),
\end{align}
where $\lambda_t: \mathcal{X}\rightarrow \mathbb{R}$ is a bound function for each $t=0, \ldots, n-1$ and $g_{\lambda_0}^*$ is the optimal control strategy. A valid bound function $\lambda_t$ maps every state $x_t \in \mathcal{X}$ to a feasible bound $l_t \in \Lambda(x_t)$.
\end{comment}
%At each $t=0, \ldots, n-1$, let $\lambda_t: \mathcal{X}\rightarrow \mathbb{R}$ be a bound function that specifies the bound for the constraint. A valid bound function $\lambda_t$ maps every state $x_t \in \mathcal{X}$ to a feasible bound $l_t \in \Lambda(x_t)$.

\section{Dynamic Programming Decomposition}
\label{section:feasible_sets_DP}
In this section,  we present the value functions that constitute a DP decomposition to compute the optimal control strategy $\boldsymbol{g}^*$ for Problem \ref{problem_1}.
To show that the computed strategy satisfies the distributionally robust constraint \eqref{optimal_g_constraint}, we need to prove its recursive feasibility for all $t=1, \ldots, n-1$. 
To achieve this, in Subsection \ref{Penalty_to_go_sets}, we define the \textit{penalty-to-go function} to express the application of the constraint only from any time $t$ to the terminal time $n$.
We then construct a set of upper bounds on the penalty-to-go function at any $t=0,\dots,n-1$, such that these bounds admit a feasible solution, and present a methodology to compute these sets. %recursively. 
We also introduce the notion of bound functions, which will be utilized it to ensure recursive feasibility.
In Subsection \ref{DP_prob1}, we use bound functions within the proposed DP decomposition and prove its optimality.

\subsection{Feasible bound for robust constraint}\label{Penalty_to_go_sets}
\label{subsection:feasible_sets}

We begin by constructing the \textit{penalty-to-go} function that maps each realization of the state $x_t \in \mathcal{X}$ at any $t = 0,\dots,n-1$ to an expected worst-case penalty to reach $n$ using a sequence of control laws $g_{t:n-1} \in \prod_{\ell = t}^{n-1}\mathcal{G}_{\ell}$. Specifically, this penalty-to-go at each $t$ is
\begin{multline}
    \label{penalty_to_go}
    \mathcal{L}_t(g_{t:n-1};x_t) = \\ \max_{P_{t:n-1} \in \mathcal{P}^{n-t}} \mathbb{E}^{g_{t:n-1}}_{P_{t:n-1}}\Bigg[\sum_{\ell=t}^{n-1} d(X_\ell,U_\ell)
    + d_n(X_n)\,\Big|\,x_t\Bigg],
\end{multline}
%where, we consider the expectation on the all the random variables with respect to the distributions generated by the choice of control strategy $\boldsymbol{g}$ from time $t$.
where the expectation on all the random variables is with respect to the distributions generated by the choice of control laws $g_{t:n-1} \in \prod_{\ell = t}^{n-1}\mathcal{G}_\ell$. Importantly, the control laws utilized prior to time $t$ do not influence the penalty-to-go from time $t$.
Additionally, note that the penalty-to-go from $t=0$ is the total expected penalty in \eqref{exp_max_constraint}. Next, we construct a set of feasible upper bounds on the penalty-to-go function.

\begin{definition}\label{Def_lambda_set}
For all $t=1,\dots,n-1$, the \textit{set of feasible upper bounds} for a state $x_t \in \mathcal{X}$ is
\begin{gather}    
\label{feasible_bound}
    \hspace{-2pt}
    \Lambda_{t}(x_t) \hspace{-2pt}:= \hspace{-2pt}\Big\{ \hspace{-1pt}l_t \in \mathbb{R} \,|\, 
    \exists\; g_{t:n-1} \hspace{-3pt} \in \hspace{-3pt} \prod_{\ell = t}^{n-1}\mathcal{G}_\ell,
    \text{ s.t. } \mathcal{L}_{t}(g_{t:n-1},x_t) \hspace{-1pt}\leq \hspace{-1pt} l_t \hspace{-1pt} \Big\},
    \end{gather}    
    with $\Lambda_n(x_n) := [d_n(x_n), d^M]$ at $t=n$ for each $x_n \in \mathcal{X}$ and $\Lambda_0(x_0) := \{l_0\}$ identically for all $x_0 \in \mathcal{X}$. 
\end{definition} 
In Definition \ref{Def_lambda_set}, the bound $l_t$ acts only upon the penalty-to-go $\mathcal{L}_t(g_{t:n-1}; x_t)$. Thus, each bound $l_t \in \Lambda_t(x_t)$ ensures feasibility of only the control laws $g_{t:n-1} \in \prod_{\ell = t}^{n-1}\mathcal{G}_\ell$ for each $x_t \in \mathcal{X}$ and $t=0,\dots,n-1$. 

Next, to ensure recursive feasibility in our solution approach, our goal is to select a feasible bound on the penalty-to-go for all $t=0,\dots,n-1$. These bounds should ensure that, starting with $l_0$ at $t=0$, there exists at least one feasible sequence of control laws $g_{t:n-1} \in \prod_{\ell = t}^{n-1}\mathcal{G}_\ell$.
We note that based on Assumption \ref{assumption_2}, such a sequence exists at $t=0$.

To this end, we establish the notion of \textit{bound functions} $\lambda_t: \mathcal{X} \to \mathbb{R}$ at each $t=0,\dots,n$. 
The output of the bound function $\lambda_t(x_t)$ is a feasible bound from Definition \ref{Def_lambda_set} for all $x_t \in \mathcal{X}$ and all $t$.
Then, for any bound $l_t \in \Lambda_t(x_t)$ and a control action $u_t \in \mathcal{U}$, the set of \textit{recursively consistent bound functions} at time $t+1$ is 
\begin{align}
\label{F_t_set}
F_t(x_t,u_t,l_t) = \Big\{ \lambda_{t+1} \,\Big|\,
    \lambda_{t+1}(x_{t+1}) \in \Lambda_{t+1}(x_{t+1}), \nonumber\\
    \;\forall x_{t+1} \in \mathcal{X}\;\text{and} \nonumber \\     \max_{P_{t}\in \mathcal{P}} \mathbb{E}_{P_{t}}[\lambda_{t+1} (X_{t+1})\,|\,x_t,u_t]\leq l_t - d(x_t,u_t)\Big\}.       
\end{align}

The inequality in the conditioning of the set in \eqref{F_t_set} yields the allowable bound at time $t+1$ after considering the ``consumption" of the bound $l_t$ by the penalty $d(x_t, u_t)$ incurred at time $t$.
This inequality is imposed upon the maximum expected value of $\lambda_{t+1}(X_{t+1})$ given the state $x_t$ and action $u_t$ to ensure recursive constraint satisfaction.
Note that this maximization captures the distributionally robust form of transition functions in \eqref{penalty_to_go} and, thus, accounts for the possible influence of the attacker. 
Thus, given $l_t \in \Lambda_t(x_t)$ at time $t$, restricting attention to $\lambda_{t+1} \in F_t(x_t, u_t, l_t)$ ensures that any selected bound at time $t+1$ is feasible.
Beginning with $l_0$ at $t=0$ and applying this property for the set $F_t(x_0, u_0, l_0)$ for all $x_0 \in \mathcal{X}$ and $u_0 \in \mathcal{U}$ ensures recursive feasibility and constraint satisfaction for all $t=0,\dots,n$.
Due to the importance of the sets $\Lambda_t(x_t)$ in \eqref{F_t_set}, it is essential to efficiently compute them before deriving an optimal strategy. 

To begin, we observe that for any feasible $l_t \in \Lambda_t(x_t)$, there exists a sequence of control laws $g_{t:n-1}$ that satisfies the constraint $\mathcal{L}_{t}(g_{t:n-1}; x_t) \leq \hat{l}_t$ for all $l_t \leq \hat{l}_t \in \mathbb{R}$ and all $x_t \in \mathcal{X}$. Hence, it is sufficient to compute the smallest feasible bound $\lambda^m_t(x_t)$ for each $x_t \in \mathcal{X}$ and note that the set $\Lambda_t(x_t) \subseteq [\lambda^m_t(x_t), \infty)$. 
At the other extreme, without loss of generality, we can restrict the maxima of $\Lambda_t(x_t)$ to $l_t^M = \min\big\{l_0,\sum_{i = t}^{n} d_i^{M}\big\}$. This is because including bounds larger than $l_t^M$ does not increase the set of feasible sequences of control laws $g_{0:t-1}$.
Thus, the structural form of the set of feasible upper bounds is $\Lambda_t(x_t) = [\lambda_t^m(x_t), l_t^M]$ for all $t=0,\dots,n-1$.

Next, we present a recursive approach to compute $\lambda_t^m(x_t)$ for all $t$ and complete the construction of $\Lambda_t(x_t)$.
\begin{lemma} \label{Lemma_l_m}
The lower bound $\lambda^m_t(x_t)$ of the set $\Lambda_t(x_t)$ for all $x_t \in \mathcal{X}$ and $t= 1, \ldots, n-1$ is obtained by the following minimization problem 
\begin{multline}
    \label{dp_penalty}
    \lambda^m_t(x_t)= \min_{u_t \in \mathcal{U}}\Big\{ d(x_t,u_t) \\
    +\max_{P_{t}\in \mathcal{P}} \mathbb{E}_{P_{t}}\big[\lambda^m_{t+1}(X_{t+1})\,\big|\,x_t, u_t\big]\Big\}.
\end{multline}
\end{lemma}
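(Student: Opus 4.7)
The plan is to prove the recursion \eqref{dp_penalty} by backward induction on $t$, starting from the terminal condition $\lambda^m_n(x_n) = d_n(x_n)$, which is immediate from the definition $\Lambda_n(x_n) = [d_n(x_n), d^M]$. The induction hypothesis at step $t+1$ is that $\lambda^m_{t+1}(x_{t+1}) = \inf_{g_{t+1:n-1}} \mathcal{L}_{t+1}(g_{t+1:n-1}; x_{t+1})$ for every $x_{t+1} \in \mathcal{X}$, and that this infimum is achieved (which is automatic since $\mathcal{X}, \mathcal{U}$ are finite).

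First, I would establish a one-step Bellman-type decomposition of the penalty-to-go itself, namely
\begin{equation*}
\mathcal{L}_t(g_{t:n-1}; x_t) = d(x_t, g_t(x_t)) + \max_{P_t \in \mathcal{P}} \mathbb{E}_{P_t}\bigl[\mathcal{L}_{t+1}(g_{t+1:n-1}; X_{t+1}) \,\big|\, x_t, g_t(x_t)\bigr].
\end{equation*}
This requires peeling off the deterministic term $d(x_t, g_t(x_t))$ from the sum in \eqref{penalty_to_go} and then splitting the joint maximum over $P_{t:n-1} \in \mathcal{P}^{n-t}$ into an outer maximum over $P_t$ and an inner maximum over $P_{t+1:n-1}$. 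The key fact used here is that the adversary is allowed to choose $P_\ell$ after observing $(x_\ell, u_\ell)$, so for each realized $X_{t+1} = x_{t+1}$ the adversary can select its worst-case continuation independently; this lets the inner maximum be pulled inside $\mathbb{E}_{P_t}$ and identified with $\mathcal{L}_{t+1}(g_{t+1:n-1}; X_{t+1})$.

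Next, I would take the infimum over $g_{t:n-1}$ of both sides. Since $g_{t+1:n-1}$ affects only the term $\mathcal{L}_{t+1}$ under the expectation, and each control law $g_\ell$ is free to depend pointwise on the realized $x_\ell$, the infimum over $g_{t+1:n-1}$ can be moved inside both $\mathbb{E}_{P_t}$ and $\max_{P_t}$, yielding $\lambda^m_{t+1}(X_{t+1})$ by the induction hypothesis. The remaining outer infimum is over $g_t$, which enters only through the value $u_t = g_t(x_t) \in \mathcal{U}$, and therefore reduces to a minimum over $u_t \in \mathcal{U}$. Combining these steps gives exactly the right-hand side of \eqref{dp_penalty}.

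The main obstacle is rigorously justifying the two interchanges of optimization and expectation: moving $\max_{P_{t+1:n-1}}$ inside $\mathbb{E}_{P_t}$ in the first step, and moving $\inf_{g_{t+1:n-1}}$ inside $\mathbb{E}_{P_t}$ and $\max_{P_t}$ in the second step. Both rely on the ``perfect recall / per-state selection'' structure of the problem: because $\mathcal{X}$ and $\mathcal{U}$ are finite and both the adversary's transition choice and the agent's control law are allowed to be arbitrary functions of the current state, pointwise optimization over each realization of $X_{t+1}$ is equivalent to optimization over the full strategy. Once these interchanges are justified, the induction closes and the recursion \eqref{dp_penalty} follows; the fact that the minimizer is attained in $\mathcal{U}$ (finite) also confirms that $\lambda_t^m(x_t)$ is indeed the minimum of $\Lambda_t(x_t)$.
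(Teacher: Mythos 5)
Your proof is correct and takes essentially the same route as the paper: both arguments reduce the claim to the identity $\lambda^m_t(x_t)=\min_{g_{t:n-1}\in \prod_{\ell=t}^{n-1}\mathcal{G}_\ell}\mathcal{L}_t(g_{t:n-1};x_t)$ and then recognize this as a standard distributionally robust dynamic program over a rectangular uncertainty set. The only difference is that the paper outsources the Bellman recursion to \cite[Theorem 2.1]{iyengar2005robust}, whereas you prove it directly by splitting the maximum over $P_{t:n-1}$ and justifying the interchange of $\min$ and $\max$ with the expectation via per-state selection of controls and transition kernels --- a self-contained substitute for the citation rather than a different argument.
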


\begin{proof}
The smallest feasible bound $\lambda^m_t(x_t)$ belongs to the set $\Lambda_t(x_t)$. From Definition \ref{Def_lambda_set}, we can see that there exists a sequence $g_{t:n-1}$ for which the penalty-to-go is exactly equal to $\lambda^m_t(x_t)$ and any bound smaller than $\lambda^m_t(x_t)$ is infeasible. Thus we can compute $\lambda^m_t(x_t)$ as 
\begin{equation}
\label{min_feasible_bound}
\lambda^m_t(x_t)=\min_{g_{t:n-1}\in \prod_{\ell = t}^{n-1}\mathcal{G}_\ell}\mathcal{L}_t(g_{t:n-1};x_t),
\end{equation}
which becomes an instance of the standard distributionally robust 
DP problem.
The objective is to minimize the penalty-to-go while being distributionally robust against the uncertainty in the transition function. Using the arguments presented in \cite[Theorem 2.1]{iyengar2005robust} for deriving the optimal objective in such a problem, we can see how Lemma \ref{Lemma_l_m} computes the minimum value of $\mathcal{L}_t(g_{t:n-1};x_t)$ at each $t$. This shows that, Lemma \ref{Lemma_l_m} can be used to recursively compute the smallest feasible bound $\lambda^m_t(x_t)$ for each $x_t \in \mathcal{X}$ and all $t$.
\end{proof}

\subsection{Dynamic program for Problem 1}\label{DP_prob1}
\label{subsection:DP}
In this subsection, before presenting the DP decomposition, we begin by defining the cost-to-go in a manner similar to the penalty-to-go in Subsection \ref{Penalty_to_go_sets}. 
%In this subsection, we present value functions that constitute a DP decomposition to compute the optimal control strategy for Problem \ref{problem_1}. To begin, we define the cost-to-go in a manner similar to the penalty-to-go in Subsection \ref{Penalty_to_go_sets}. 
For all $t=0,\dots,n-1$, the cost-to-go from any  $x_t \in \mathcal{X}$ is
\begin{gather}
    \label{cost_to_go}    \mathcal{J}_t(g_{t:n-1};x_t)=\mathbb{E}^{g_{t:n-1}}
    \Bigg[\sum_{\ell=t}^{n-1} c(X_\ell,U_\ell)
    + c_n(X_n)\,\Big|\,x_t\Bigg],   
\end{gather}
where $\mathbb{E}^{g_{t:n-1}}$ denotes the expectation on all the random variables with respect to the distributions generated by the nominal transition function $\bar{P}$ and the choice of control laws $g_{t:n-1} \in \prod_{\ell=t}^{n-1} \mathcal{G}_{\ell}$.
Note that the cost-to-go at time $t$ is affected only by the sequence of control laws $g_{t:n-1}$ and the cost-to-go at $t=0$ is equivalent to the performance measure \eqref{performance_g} for any strategy $\boldsymbol{g}$. 

Before we can construct a DP decomposition, we recall that Problem \ref{problem_1} also restricts the set of feasible strategies by constraining the penalty-to-go $\mathcal{L}_0(\boldsymbol{g}; x_0)$ with an upper bound $l_0$. 
Thus, as derived in Subsection \ref{Penalty_to_go_sets}, we need to impose a constraint using the bound function $\lambda_t \in F_{t-1}(x_{t-1}, u_{t-1}, l_{t-1})$ for all $t=1,\dots,n$ to ensure recursive feasibility in our solution approach. 
To this end, at each $t=0, \ldots, n-1$, we expand our state-space $\mathcal{X}$ by appending with it a set of possible bounds, $\mathbb{R}$.
Thus, the value functions of our DP decomposition are functions of $(X_t,L_t) \in \mathcal{X} \times \mathbb{R}$, where the random variable $L_t = \lambda_t(X_t)$.
The realizations of the random variable $L_t$ are denoted by $l_t$.
Furthermore, at each $x_t \in  \mathcal{X}$, the control law $g_{t} \in \mathcal{G}_t$ at each $t=0, \ldots, n-1$ selects a control action $U_t \in \mathcal{U}$ using the expanded state space as $U_t=g_t(X_t,L_t)$.

\begin{remark}
    We note that expanding the state space from $X_t$ to $(X_t, L_t)$ expands the domain of control laws as compared to the standard Markovian control law for regular MDPs. However, the result in Subsection \ref{Penalty_to_go_sets} is still valid for control laws with this larger domain because the functions introduced in \ref{Penalty_to_go_sets} depend only on the realization $x_t \in \mathcal{X}$ of $X_t$ and are independent of the realization $l_t = \lambda_t(x_t)$ of $L_t$.
\end{remark}

For all $t=0, \ldots, n-1$, the value function for all $x_t \in \mathcal{X}$ and $l_t \in \Lambda(x_t)$ corresponding to the sequence of control laws $g_{t:n-1}$ is given by
\begin{equation}
\label{Val_func_strategy}
V_t^{g_{t:n-1}}(x_t,l_t)=
\begin{cases}
    \mathcal{J}_t(g_{t:n-1};x_t) & \text{if}\; 
    \mathcal{L}_t(g_{t:n-1};x_t)\leq l_t,  \\
    \kappa & \text{otherwise},   
\end{cases}
\end{equation}
where $\kappa \in \mathbb{R}$ is a large  constant that satisfies $\kappa > n \cdot c^M$ and indicates constraint violation by $g_{t:n-1}$. 
Eventually, when we minimize over the set of strategies, the presence $\kappa$ will help us exclude infeasible solutions.
At the terminal time $n$, where no actions are allowed, the value function is simply $V_n(x_n,l_n)=c(x_n)$.
%The value function $V_0^{\boldsymbol{g}}(x_0,l_0)$ evaluates the performance of any strategy $\boldsymbol{g} \in \mathcal{G}$ for the initial state $x_0$ and a given bound $l_0$ on for the robust constraint.
Then, the optimal value functions for all $x_t \in \mathcal{X}$, $l_t = \lambda_t(x_t)$ and all $t=0,\dots, n-1$ are
\begin{equation}
\label{optimal_Val_func}
    V_t(x_t,l_t) = \min_{g_{t:n-1} \in \prod_{\ell=t}^{n-1} \mathcal{G}_{\ell}} V_t^{g_{t:n-1}}(x_t,l_t).
\end{equation}

\begin{theorem}\label{theorem_dp} %\textit{(DP decomposition):}
At each $t=0, \ldots, n-1$, for all $x_t \in \mathcal{X}$ and $l_t = \lambda_t(x_t)$, the optimal value function can be recursively computed using the following DP decomposition:
\begin{align}
\label{dp}
V_{t}(x_t,l_t)=\min_{\substack{ u_t\in \mathcal{U},\\   
    \lambda_{t+1}\in F_t(x_t,u_t,l_t)}}
    \Bigg\{ c(x_t,u_t) + \nonumber\\
    \mathbb{E}\Big[V_{t+1}(X_{t+1},\lambda_{t+1}(X_{t+1})) \,|\, x_t,u_t\Big]\Bigg\},   
\end{align}
where, at the terminal time $t=n$, the optimal value function is simply given by $V_n(x_n,l_n)=c(x_n)$. %The set of feasible bound functions at time $t+1$ as given by \eqref{F_t_set} ensures that the control strategy computed using the DP decomposition is recursively feasible at each $t$.
\end{theorem}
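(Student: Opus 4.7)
The plan is to prove Theorem \ref{theorem_dp} by backward induction on $t$, from $t=n$ down to $t=0$, showing that the minimum in \eqref{optimal_Val_func} equals the right-hand side of \eqref{dp}. The base case at $t=n$ is immediate from the terminal definition. For the inductive step, assume the claim holds at $t+1$, so $V_{t+1}(x_{t+1},l_{t+1})$ equals the minimum cost-to-go from $(x_{t+1},l_{t+1})$ whenever $l_{t+1}\in\Lambda_{t+1}(x_{t+1})$, and equals the constant $\kappa$ otherwise.

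The main step is a one-step-plus-remainder decomposition of both \eqref{cost_to_go} and \eqref{penalty_to_go}. For any $u_t=g_t(x_t,l_t)$, the cost-to-go admits the nominal Bellman identity
\[
\mathcal{J}_t(g_{t:n-1};x_t)=c(x_t,u_t)+\mathbb{E}\bigl[\mathcal{J}_{t+1}(g_{t+1:n-1};X_{t+1})\,\big|\,x_t,u_t\bigr]
\]
under $\bar{P}$, while the penalty-to-go satisfies the worst-case Bellman identity
\[
\mathcal{L}_t(g_{t:n-1};x_t)=d(x_t,u_t)+\max_{P_t\in\mathcal{P}}\mathbb{E}_{P_t}\bigl[\mathcal{L}_{t+1}(g_{t+1:n-1};X_{t+1})\,\big|\,x_t,u_t\bigr],
\]
obtained by separating the outer maximum over $P_{t:n-1}\in\mathcal{P}^{n-t}$ in \eqref{penalty_to_go} into a maximum over $P_t$ and the tail $P_{t+1:n-1}\in\mathcal{P}^{n-t-1}$; this split is legal because $\mathcal{P}^{n-t}$ is a rectangular product set and the adversary's choices at different times are unconstrained across each other.

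Using these identities, I would convert the scalar constraint $\mathcal{L}_t(g_{t:n-1};x_t)\leq l_t$ into a per-state constraint at time $t+1$ through the bound function. Setting $\lambda_{t+1}^{g}(x_{t+1}):=\mathcal{L}_{t+1}(g_{t+1:n-1};x_{t+1})$ for every $x_{t+1}\in\mathcal{X}$ automatically yields $\lambda_{t+1}^{g}(x_{t+1})\in\Lambda_{t+1}(x_{t+1})$ by Lemma \ref{Lemma_l_m} and the upper bound $l^M_{t+1}$. The penalty decomposition then shows that $\mathcal{L}_t(g_{t:n-1};x_t)\leq l_t$ if and only if $\lambda_{t+1}^{g}\in F_t(x_t,u_t,l_t)$. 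Conversely, for any $\lambda_{t+1}\in F_t(x_t,u_t,l_t)$, constraining the continuation by $\mathcal{L}_{t+1}(g_{t+1:n-1};x_{t+1})\leq\lambda_{t+1}(x_{t+1})$ at every $x_{t+1}$ preserves $\mathcal{L}_t(g_{t:n-1};x_t)\leq l_t$, and by the induction hypothesis this inequality is exactly what selects the $\mathcal{J}_{t+1}$ branch of $V_{t+1}(x_{t+1},\lambda_{t+1}(x_{t+1}))$ instead of the penalty $\kappa$.

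Combining these pieces, the minimization \eqref{optimal_Val_func} at time $t$ splits into an outer choice of $(u_t,\lambda_{t+1})$ with $\lambda_{t+1}\in F_t(x_t,u_t,l_t)$ and an inner choice of $g_{t+1:n-1}$ that is separable across the finitely many $x_{t+1}\in\mathcal{X}$; hence the expectation under $\bar{P}$ and the inner minimum commute, the inner minimum evaluates to $V_{t+1}(X_{t+1},\lambda_{t+1}(X_{t+1}))$ by the induction hypothesis, and \eqref{dp} follows. The hardest step is justifying this min–expectation exchange together with the pointwise-tight choice of $\lambda_{t+1}$: one must check that pairing a pointwise-optimal continuation $g^{*}_{t+1:n-1}$ with $\lambda_{t+1}=\lambda_{t+1}^{g^{*}}$ stays in $F_t(x_t,u_t,l_t)$. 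This is guaranteed by the Bellman identity for $\mathcal{L}_t$, which transforms the $F_t$ inequality at $\lambda_{t+1}^{g^{*}}$ directly back into $\mathcal{L}_t(g^{*}_{t:n-1};x_t)\leq l_t$, recovering the original feasibility. Finiteness of $\mathcal{X}$, $\mathcal{U}$, and $\mathcal{P}$ together with Assumption \ref{assumption_1} ensure that every extremum is attained, so \eqref{dp} is well defined and yields $V_t$.
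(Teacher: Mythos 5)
Your proof is correct and follows essentially the same route as the paper's: backward induction with a one-step decomposition of the cost-to-go and penalty-to-go, reducing the constrained minimization at time $t$ to a choice of $(u_t,\lambda_{t+1})$ with $\lambda_{t+1}\in F_t(x_t,u_t,l_t)$ followed by the optimal continuation, which the paper's Appendix carries out as matching upper and lower bounds via the policies $g^*_{t:n-1}$ and $\hat{g}_{t:n-1}$. The one place you are more explicit than the paper is the rectangularity-based Bellman identity for $\mathcal{L}_t$, which is precisely what justifies that the exact penalty-to-go $\mathcal{L}_{t+1}(g^*_{t+1:n-1};\cdot)$ is a member of $F_t(x_t,u_t^*,l_t)$ — a step the paper's lower-bound argument uses but does not spell out.
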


\begin{proof}
We prove that the DP decomposition presented in Theorem \ref{theorem_dp} computes the optimal value function recursively using mathematical induction.
At the terminal time, the value function is given by $V_n(x_n,l_n)= c(x_n)$.
%, as there are no actions at time $n$, hence this is the optimal value function.
Suppose that the optimal value function $V_{t+1}$ at time $t+1$ can be computed according to \eqref{dp}. It is enough to show that \eqref{dp} can be used to compute $V_t(x_t,l_t)$ at time $t$.
We need first to show that the left-hand side in \eqref{dp} is lower bounded by the right-hand side and vice-versa. 
%\begin{align}
%\label{dp_geq}
%V_{t}(x_t,l_t)\geq\min_{\substack{ u_t\in \mathcal{U},\\   
%    \lambda_{t+1}\in F_t(x_t,u_t,l_t)}}
%    \Bigg\{ c(x,u) + \nonumber\\
%    \mathbb{E}\Big[V_{t+1}(X_{t+1},\lambda_{t+1}(X_{t+1})) \,|\, x_t,u_t\Big]\Bigg\}.  %\end{align}
%Then we can show that the L.H.S is also upper bounded by the R.H.S   
%\begin{align}
%\label{dp_leq}
%V_{t}(x_t,l_t)\leq\min_{\substack{ u_t\in \mathcal{U},\\   
%    \lambda_{t+1}\in F_t(x_t,u_t,l_t)}}
%    \Bigg\{ c(x,u) + \nonumber\\
%    \mathbb{E}\Big[V_{t+1}(X_{t+1},\lambda_{t+1}(X_{t+1})) \,|\, x_t,u_t\Big]\Bigg\},  %\end{align}
As a result, the left-hand side of \eqref{dp} will be both upper and lower bounded by the expression on the right-hand side.
Hence, we conclude that in \eqref{dp}, the left-hand side is equal to the right-hand side. Details of the mathematical arguments are provided in Appendix A. 
\end{proof}

\begin{remark} 
We showed that the DP decomposition presented in \eqref{dp} computes the optimal value function at each $t=0, \ldots, n-1$. Using Theorem \ref{theorem_dp}, we can compute the sequence of optimal control laws $g^*_{0:n-1}\in \prod_{\ell=0}^{n-1} \mathcal{G}_{\ell}$ which yields the optimal value function $V_0(x_0,l_0)$ at time $t=0$.
\end{remark}

\begin{remark} 
At any $t=0, \ldots, n-1$, and for all $x_t \in \mathcal{X}$ and a feasible bound $l_t$, Theorem \ref{theorem_dp} states that the optimal control action is $u^*_t=g^*_t(x_t,l_t)$, i.e., the minimizing argument in \eqref{dp}.
Subsequently, the optimal bound function  $\lambda^*_{t+1}(\cdot)$ is computed as a function of the state $x_t$, bound $l_t$, and optimal action $u^*_t$.
Since the optimal bound function is computed at the preceding time step, during implementation, $\lambda_t^*$ is available at the onset of time $t$ and the agent ensures that $l_t = \lambda_t^*(x_t)$.
Hence, to solve Problem \ref{problem_1}, the control action at all $t$ is selected as $u^*_t=g^*_t(x_t,\lambda^*_{t}(x_t))$. This shows that the optimal control strategy can be selected using $x_t \in \mathcal{X}$ during implementation.
\end{remark}

\section{Numerical Example}
\label{section:numerical_example}
In this Section, we illustrate the efficiency of the effectiveness of our approach using a numerical example.
We consider a reach-avoid problem where an agent seeks to navigate to a designated cell in a $4 \times 4$ grid world while avoiding a different cell in the grid. 
At each $t=0,\dots,n$, the agent's position $X_{t}$ takes values in the set of grid cells $\mathcal{X} = \big\{(0,0),(0,1),\dots,(3,2),(3,3)\big\}$. 
The action $U_t$ denotes the agent's direction of movement and takes values in the set $\mathcal{U} = \{(-1,0),(1,0),(0,0),(0,1),(0,-1)\}$.  %Starting at $X_0$, the agent chooses an action, denoted as $U_t \in \mathcal{U}$. 
Under normal system operation, the agent has a small chance of movement failure by ``slipping." This is modeled by considering that at each $t$, the agent moves in the direction selected by the action $U_t$ with probability $0.8$ and may slip by moving in either the clockwise or anticlockwise direction to $U_t$ with probabilities of $0.1$ each.
The agent does not slip when selecting the action $(0,0)$, i.e., when deciding not to move.
Thus, starting at a randomly selected initial state $x_0 \in \mathcal{X}$, the agent's dynamics for all for all $t=0,\dots,n-1$ are
\begin{align}
\label{num_sim_dynamics}
    P(x_{t+1}~|~x_t, u_t) = 
    \begin{cases}
        0.8 \quad \text{if } x_{t+1} = x_t + u_t, \\
        0.1 \quad \text{if } x_{t+1} = x_t + u_t^{\text{cl}}, \\
        0.1 \quad \text{if } x_{t+1} = x_t - u_t^{\text{cl}},
    \end{cases}
\end{align}
where if $u_t = (u_t^1, u_t^2)$, then $u_t^{\text{cl}} = (-u_t^2, u_t^1)$ is the clockwise rotation of $u_t$.
If the agent's position $X_t$ is at one of the four corners of the grid or along the edge of the grid, then the agent may only slip only in the available directions and not move off the grid. Thus, if only one direction is available, they move in the direction of the selected action with a probability of $0.8$ and move in the available direction with a probability of $0.2$.

% The agent's position evolves as $X_{t+1} = \delta(X_t + U_t \in \mathcal{X}){\cdot}(X_t + U_t) + (1 - \delta(X_t + U_t \in \mathcal{X}) ){\cdot} X_t$, where $U_t \in \mathcal{U}$ and $\delta$ is returns $1$ or $0$ after checking the argument. 
%\color{red} Example of the movement: Lets say that the agent is at the (1,1) and the action is 'Right', then he goes 'Right' w.p. 0.8 and w.p. 0.1 goes 'Up' and w.p. 0.1 goes 'down'\color{black}. 

%\begin{figure}
%    \centering
%    \includegraphics[width=0.25\linewidth]{Figures/simple_grid.png}
%    \caption{Target and Trap}
%    \label{fig:enter-label}
%\end{figure}

The goal of the agent is to reach the destination cell $(3,2)$ while avoiding a ``trap" cell $(2,1)$.
Thus, after a time horizon of $n=10$ time steps, the agent incurs a terminal cost given by the distance from the position $X_{10}$ and the target $(3,2)$, i.e., $c_n(X_n) = \eta\big(X_n, (3,2)\big)$ where $\eta(\cdot, \cdot)$ denotes the Manhattan distance. The agent incurs no interim costs, i.e., $c(X_t, U_t) := 0$ for all $t=0,\dots,n-1$.
Furthermore, the agent incurs a penalty of $1$ unit at any instance of time if their position coincides with the trap $(2,1)$, i.e., $d(X_t, U_t) = \mathbb{I}\big[X_t = (3,2)\big]$ for all $t=0,\dots,n$.
An adversary, if present, attacks the reliability of the agent's actuator. Thus, under an attack, the probability of slipping may increase. We incorporate vulnerability to attacks by defining, on the tuple of actual movements $(u_t, u_t^{\text{cl}}, -u_t^{\text{cl}})$ for a given action $u_t \in \mathcal{U}$, the set of possible probability distributions $\mathcal{P} := \big\{(0.7, 0.3, 0),(0.7, 0.2, 0.1), \dots, (0.7, 0, 0.3), (0.8, 0.2, 0),$ $(0.8, 0.1, 0.1),(0.8, 0, 0.2)\big\}$. 

We run $5000$ simulations for two initial positions, $(1,0)$ and $(0,1)$ with $l_0=2.5$, for which the heat map of the path selected by the agent is given in Fig. \ref{fig:First_initial_condition} and Fig. \ref{fig:Second_initial_condition} respectively.
In each simulation, the system is vulnerable to attack
, and the transition probability is randomly picked from the set $\mathcal{P}$ as given above to emulate the attack.
For demonstration, the computed optimal control strategy is implemented in a receding horizon manner for $200$ time steps. %with the horizon of $n$ time steps.
To validate how our approach provides more control over the trade-off between conservativeness and optimality, we compare three cases for each initial condition.
In the first case, we consider the set of probability distributions $\mathcal{P}$ as given above to compute the distributionally robust control strategy. In this case, the agent visits the trap cell $497$ and $250$ times as shown in Fig. \ref{fig:1a} and Fig. \ref{fig:2a}, respectively.
For the second case, we expand the set $\mathcal{P}$ to include every possible distribution in $\Delta(\mathcal{X})$ to compute a conservative strategy. As a result, in Fig. \ref{fig:1b} and Fig. \ref{fig:2b}, the number of times the agent moves into the trap cell are $35$ and $10$, respectively.
Lastly, we compute a stochastic strategy by considering only the nominal transition distribution to be the set $\mathcal{P}$. Accordingly, in Fig. \ref{fig:1c} and Fig. \ref{fig:2c}, the agent moves $764$ and $363$ times into the trap cell, respectively.
We observe that when the agent utilizes the distributionally robust control strategy. However, it visits the trap cell more often, it quickly reaches the target cell when compared to the conservative strategy. On the other hand, it reaches the target cell as quickly as the stochastic strategy, with fewer visits to the trap cell.

\begin{figure*}[h!]
    \centering
    \begin{subfigure}[t]{0.31\textwidth}
        \includegraphics[width=\textwidth, keepaspectratio]        {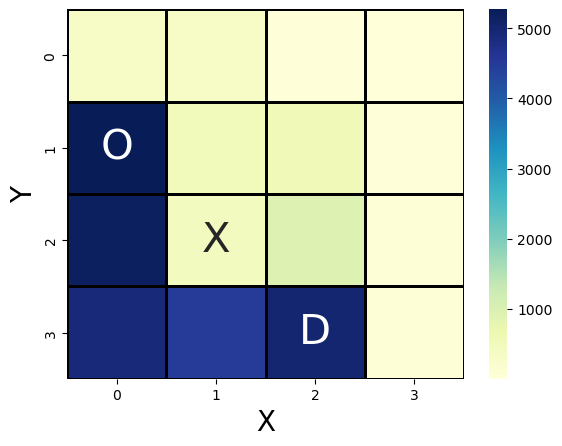}
        \caption{}
        \label{fig:1a}
    \end{subfigure}
    \hspace{5pt}
    \vline
    \hspace{0pt}
    \hfill
    \begin{subfigure}[t]{0.31\textwidth}
        \includegraphics[width=\textwidth, keepaspectratio]{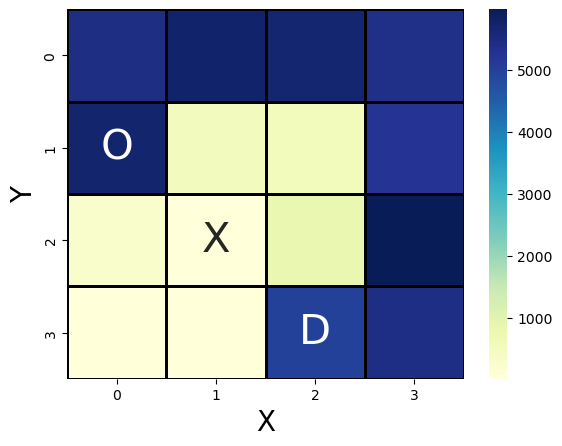}
        \caption{}
        % the adherence-aware Q-learning function, the base line police, and the regular Q-learning respectively.}
        \label{fig:1b}
    \end{subfigure}
    \hspace{5pt}
    \vline
    \hspace{0pt}
    \hfill
    \begin{subfigure}[t]{0.31\textwidth}
        \includegraphics[width=\textwidth, keepaspectratio]%{Figures/results_X_10_l_baseline.png}
        {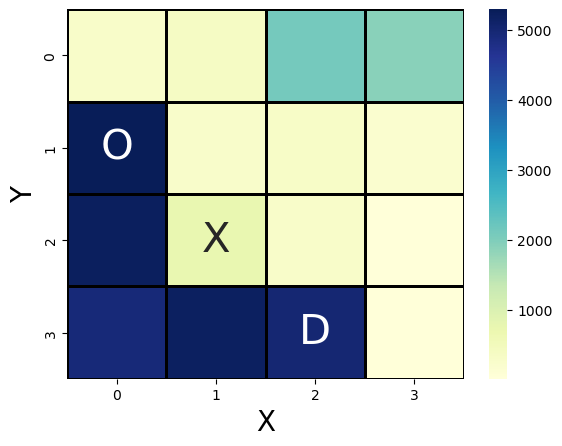}
        \caption{}
        \label{fig:1c}
    \end{subfigure}
    \caption{For the initial state $(1,0)$, the strategy implemented in : (a) distributionally robust (b) conservative (c) stochastic}
    \label{fig:First_initial_condition}
%    \vspace{-5pt}
\end{figure*}

\vspace{15pt}

\begin{figure*}[h!]
    \centering
    \begin{subfigure}[t]{0.31\textwidth}
        \includegraphics[width=\textwidth, keepaspectratio]                {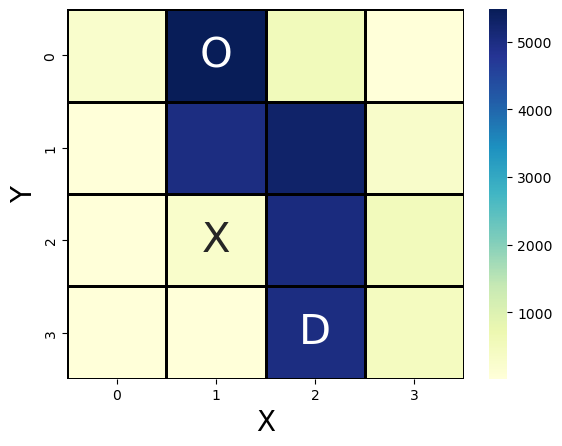}
        \caption{}
        \label{fig:2a}
    \end{subfigure}
    \hspace{5pt}
    \vline 
    \hspace{0pt}
    \hfill
    \begin{subfigure}[t]{0.31\textwidth}
        \includegraphics[width=\textwidth, keepaspectratio]
        {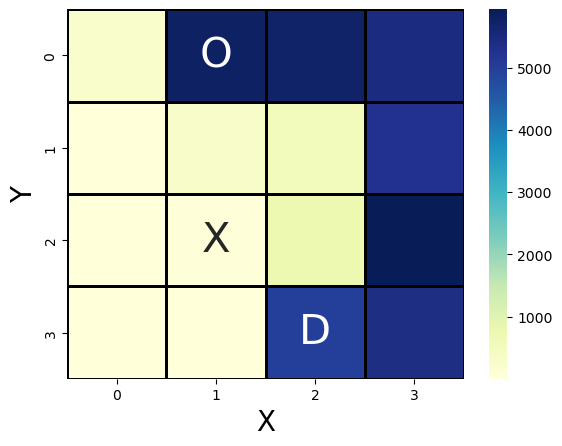}
        \caption{}
        % the adherence-aware Q-learning function, the base line police, and the regular Q-learning respectively.}
        \label{fig:2b}
    \end{subfigure}
    \hspace{5pt}
    \vline
    \hfill
    \begin{subfigure}[t]{0.31\textwidth}
        \includegraphics[width=\textwidth, keepaspectratio]%{Figures/results_X_10_l_baseline.png}
        {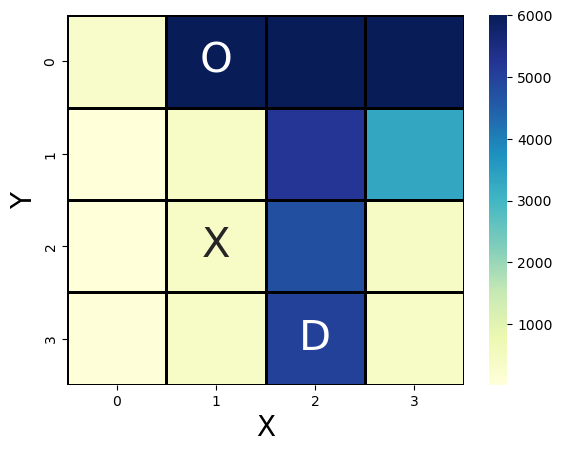}

        \caption{}
        \label{fig:2c}
    \end{subfigure}
    \caption{For the initial state $(0,1)$, the strategy implemented in : (a) distributionally robust (b) conservative (c) stochastic}
    %\caption{Grid world results for initial state = (0,1).}
    \label{fig:Second_initial_condition}
    \vspace{-5pt}
\end{figure*}

\section{Conclusion}
\label{section:conclusion}

In this paper, we proposed the problem of controlling a CPS, which is vulnerable to attack as a distributionally robust stochastic cost minimization problem.
For this problem, we presented DP decomposition to compute the optimal control strategy, which ensures the recursive feasibility of the distributionally robust constraint. Finally, we illustrated the utility of our solution approach using a numerical example. Future work should consider using these results in tandem with fast computation techniques for applications with large state space like human-robot collaboration tasks, power grids, and connected and automated vehicles. In such applications, it is essential to avoid over-conservatism while maintaining resilience against any vulnerabilities.

\bibliographystyle{ieeetr}
\bibliography{References, Latest_IDS}

\section*{Appendix A - Proof of Theorem 1}
We present a detailed proof of Theorem \ref{theorem_dp} by elaborating on how we show that left-hand side of \eqref{dp} is both upper and lower bounded by the expression on the right-hand side.

To begin, we recall the induction assumption that the optimal value function $V_{t+1}$ at time $t+1$ can be computed according to \eqref{dp}.

At time $t$, for each $x_t \in \mathcal{X}$ and a feasible bound $l_t = \lambda_t(x_t)$, let $g^*_{t:n-1} \in \prod_{\ell=t}^{n-1} \mathcal{G}_{\ell}$ be the sequence of control laws such that 
\begin{equation}
    g^*_{t:n-1}= \argmin_{g_{t:n-1} \in \prod_{\ell=t}^{n-1} \mathcal{G}_{\ell}} V_t^{g_{t:n-1}}(x_t,l_t).
\end{equation}
The value function corresponding to this sequence of control laws is the optimal value function at time $t$ as given by
\begin{equation}
    V_t(x_t,l_t)=V_t^{g^*_{t:n-1}}(x_t,l_t).
\end{equation}
We expand the expression for the value function of this sequence of control laws as
\begin{multline}
V_t(x_t,l_t)=\mathbb{E}^{g^*_{t:n-1}}\Bigg[\sum_{\ell=t}^{n-1} c(X_\ell,U_\ell) + c_n(X_n)\,\Big|\,x_t\Bigg],\\
=c(x_t,g^*_t(x_t,l_t))+\mathbb{E}^{g^*_{t:n-1}}\Bigg[\sum_{\ell=t+1}^{n-1} c(X_\ell,U_\ell) + c_n(X_n)\,\Big|\,x_t\Bigg],\label{dphard_proof_2}
\end{multline} 
where, we note that the penalty-to-go of the sequence $g^*_{t:n-1}$ upper bounded by $l_t$. Hence, we only analyze the cost-to-go component of the value function for this sequence of control laws. We use the law of iterated expectations to introduce $X_{t+1}$ into the inner expectation as
\begin{multline}
\label{dphard_proof_3}
V_t(x_t,l_t)=c(x_t,g_t^*(x_t,l_t)) + \\\mathbb{E}\Bigg[ \mathbb{E}^{g_{t+1:n-1}^*}\Bigg[\sum_{\ell=t+1}^{n-1} c(X_\ell,U_\ell)+ c_n(X_n)\,\Big|\,X_{t+1},\Bigg]\\
\,\Big|\,x_t,g_t^*(x_t,l_t)\Bigg].
\end{multline}
Next, using \eqref{cost_to_go}, we write the inner expectation as the cost-to-go from time $t+1$ for the sequence ${g}^*_{t+1:n-1}$: 
\begin{multline}
    \label{dphard_proof_4} V_t(x_t,l_t)=c(x_t,g_t^*(x_t,l_t)) + \\\mathbb{E}\Bigg[ \mathcal{J}_{t+1}(g_{t+1:n-1}^*;X_{t+1})\,\Big|\,x_t,g_t^*(x_t,l_t)\Bigg ].    
\end{multline}
From \eqref{Val_func_strategy} and \eqref{optimal_Val_func}, we can see that the cost-to-go for ${g}^*_{t+1:n-1}$ is always as big as the optimal value function at $t+1$. Considering this, in addition to the induction hypothesis, we can write:
\begin{multline}
    \label{dphard_proof_5}    
    V_t(x_t,l_t)\geq c(x_t,g_t^*(x_t,l_t)) + \\
    \mathbb{E}\Bigg[ V_{t+1}(X_{t+1},\mathcal{L}_{t+1}(g_{t+1:n-1}^*;X_{t+1}))\,\Big|\,x_t,g_t^*(x_t,l_t)\Bigg],
\end{multline}
where, we note that $\mathcal{L}_{t+1}(g_{t+1:n-1}^*,X_{t+1})$ is exactly the penalty-to-go for the sequence of control laws $g^*_{t+1:n-1}$, hence it is a feasible bound at time $t+1$.
By minimizing the right-hand side  of  \eqref{dphard_proof_5} over feasible combination of the action $u_t$ and future bound function $\lambda_{t+1}$ we have: 
\begin{multline}
    \label{dphard_proof_6} 
    V_t(x_t,l_t)\geq \min_{\substack{ u_t\in \mathcal{U},\\   
    \lambda_{t+1}\in F_t(x_t,u_t,l_t)}}\Bigg\{ c(x_t,u_t) + \\
\mathbb{E}\Big[V_{t+1}(X_{t+1},\lambda_{t+1}(X_{t+1})) \,|\, x_t,u_t\Big]\Bigg\}.
\end{multline}
This shows that the left-hand side of \eqref{dp}, is greater than the right-hand side. 

%%%%%%%%%%%%%%%%% part 2 of the proof 
For each time $t$ and $x_t \in \mathcal{X}$ with the bound imposed being $l_t=\lambda_t(x_t)$, we consider   
\begin{multline}
\label{dphard_proof_7}
(u^*_t,\lambda^*_{t+1}) = \argmin_{\substack{ u_t\in \mathcal{U},\\   
    \lambda_{t+1}\in F_t(x_t,u_t,l_t)}} \Bigg\{ c(x_t,u_t) +\\
\mathbb{E}\Big[V_{t+1}(X_{t+1},\lambda_{t+1}(X_{t+1})) \,|\, x_t,u_t\Big]\Bigg\},
\end{multline}
where, $u^*_t$ is the optimal action at time $t$ and $\lambda^*_{t+1}$ is the corresponding optimal bound function at time $t+1$.
Now, we construct a sequence of control laws $\hat{g}_{t:n-1}$ such that:
\begin{align}
\label{g_hat_cons1}
\hat{g}_t(x_t,l_t)&=u^*_t\\  
\hat{g}_{t+\ell}&=g^*_{t+\ell},\text{ for } \ell= 1, \ldots, (n-1-t). 
\label{g_hat_cons11}
\end{align}
%where we note that $g^*_{t+h}$ for $h= 1, \ldots, n-1$ denotes the sequence of control laws $g_$ for each $x_{t+1} \in \mathcal{X}$ and bound picked by the function $\lambda^*_{t+1}$. Hence, $g^*_{t+1:n-1}$ satisfy:
This implies that at time $t+1$, the newly constructed sequence of control laws $\hat{g}_{t+1:n-1}$ satisfies the following constraint
\begin{equation}
\label{g_hat_cons2}
\mathcal{L}_t(\hat{g}_{t+1:n-1};x_{t+1})\leq \lambda^*_{t+1}(x_{t+1}) \;\forall\; x_{t+1} \in  \mathcal{X}. 
\end{equation}
Next, by using \eqref{cost_to_go}, we establish the cost-to-go of the sequence of control laws $\hat{g}_{t:n-1}$ as an upper bound to the optimal value function at $t$. This is given by 
\begin{multline}\label{dphard_proof_9}
    V_t(x_t,l_t)\leq \mathcal{J}_t(\hat{g}_{t:n-1};x_t)\\    =\mathbb{E}^{\hat{g}_{t:n-1}}\Bigg[\sum_{\ell=t}^{n-1} c(X_\ell,U_\ell) + c_n(X_n) \,\Big|\,x_t\Bigg].    
\end{multline}
Then, we use the law of iterated expectations on the right-hand side of \eqref{dphard_proof_9} to introduce the random variable $X_{t+1}$, which yields
\begin{multline}\label{dphard_proof_10}
V_t(x_t,l_t)\leq c(x_t,\hat{g}_{t}(x_t,l_t)) + \\\mathbb{E}\Bigg[ \mathbb{E}^{\hat{g}_{t+1:n-1}}\Bigg[\sum_{\ell=t+1}^{n-1} c(X_\ell,U_\ell)+ c_n(X_n)\\\,\Big|\,X_{t+1},\Bigg]\,\Big|\,x_t,\hat{g}_{t}(x_t,l_t)\Bigg ],
\end{multline}
where we note that the inner expectation only depends on the choice of the sequence $\hat{g}_{t+1:n-1}$.
We use \eqref{Val_func_strategy} and \eqref{g_hat_cons2} to write the inner expectation as the value function of this sequence of control laws, given by
\begin{multline}\label{dphard_proof_11}
V_t(x_t,l_t)\leq c(x_t,\hat{g}_{t}(x_t,l_t)) + \\\mathbb{E}\Bigg[ V_{t+1}^{\hat{g}_{t+1:n-1}}(X_{t+1}, \lambda^*_{t+1}(X_{t+1}) )\,\Big|\,x_t, \hat{g}_{t}(x_t,l_t)\Bigg ].
\end{multline}
By the construction given in \eqref{g_hat_cons11}, we re-write the value function for the sequence of control laws $\hat{g}_{t+1:n-1}$ as 
\begin{multline}\label{dphard_proof12}
V_t(x_t,l_t)\leq c(x_t,\hat{g}_{t}(x_t,l_t)) +\\\mathbb{E}\Bigg[ V_{t+1}(X_{t+1},\lambda^*_{t+1}) \,\Big|\,x_t,\hat{g}_{t}(x_t,l_t)\Bigg ],\end{multline}
where, using \eqref{g_hat_cons1}, we recall that at time $t$ the control law $\hat{g}_t$ picks the optimal action $u^*_t$. Hence, 
\begin{multline}\label{dphard_proof12}
V_t(x_t,l_t)\leq c(x_t,u^*_t) +\\\mathbb{E}\Bigg[ V_{t+1}(X_{t+1},\lambda^*_{t+1}) \,\Big|\,x_t, \hat{g}_{t}(x_t,l_t)\Bigg ].\end{multline}

Now, we can clearly see that the optimal value function at time $t$ satisfies:
\begin{multline}
    \label{dphard_proof13} 
    V_t(x_t,l_t)\leq \min_{\substack{ u_t\in \mathcal{U},\\   
    \lambda_{t+1}\in F_t(x_t,u_t,l_t)}} \Bigg\{ c(x_t,u_t) + \\
\mathbb{E}\Big[V_{t+1}(X_{t+1},\lambda_{t+1}(X_{t+1})) \,|\, x_t,u_t\Big]\Bigg\}.
\end{multline}

The inequalities in \eqref{dphard_proof_6} and \eqref{dphard_proof13} prove that the optimal value function at time $t$ is both upper and lower bounded by the right-hand side of \eqref{dp}. 
%Thus, using mathematical induction, we prove that the DP decomposition presented in \eqref{dp} computes the optimal value function at each $t=0, \ldots, n-1$. As a result, we can compute the optimal control strategy $\boldsymbol{g}^*=(g^*_0, \ldots, g^*_{n-1})$ which solves Problem \ref{problem_1}. 

\end{document}